\DeclareTextFontCommand{\textcyr}{\fontencoding{OT2}\fontfamily{wncyr}\fontseries{m}\fontshape{n}\selectfont}
\definecolor{brown}{RGB}{150,100,0}
\definecolor{purple}{RGB}{128,0,128}
\definecolor{grey}{RGB}{128,128,128}
\definecolor{gray}{RGB}{32,32,32}
\theoremstyle{plain}
\newtheorem{theorem}{Theorem}
\newtheorem{proposition}{Proposition}
\newtheorem{lemma}{Lemma}
\newtheorem{corollary}{Corollary}
\newtheorem*{corollary*}{Corollary}
\theoremstyle{definition}
\newtheorem{remark}{Remark}
\newtheorem{example}{Example}
\newtheorem*{example*}{Example}
\newtheorem{subsec}{}
\title[Real component group]
{A short proof of Timashev's theorem\\ on the real component group\\  of a real reductive  group}
\author{Mikhail Borovoi and Ofer Gabber}
\address{Borovoi: Raymond and Beverly Sackler School of Mathematical Sciences,
Tel Aviv University, 6997801 Tel Aviv, Israel}
\email{borovoi@tauex.tau.ac.il}
\address{Gabber: IHES, Le Bois-Marie, 35, Route de Chartres,
F-91440 Bures-sur-Yvette, France}
\email{gabber@ihes.fr}
\keywords{Real reductive group, real component group}
\subjclass{Primary: %
 20G20
}
\newcommand{\into}{\hookrightarrow}
\newcommand{\onto}{\twoheadrightarrow}
\newcommand{\labelt}[1]{\xrightarrow{\makebox[1.2em]{\scriptsize ${#1}$}}}
\newcommand{\labelto}[1]{\xrightarrow{\makebox[1.5em]{\scriptsize ${#1}$}}}
\newcommand{\longisoto}{{\ \labelt{\raisebox{-1.ex}{$\sim$}}\ }}
\newcommand{\isoto}{\longisoto}
\newcommand{\hs}{\kern 0.8pt}
\newcommand{\hssh}{\kern 1.2pt}
\newcommand{\hm}{\kern -0.8pt}
\newcommand{\emm}{\bfseries}
\newcommand{\Cd}{\mathds{C}}
\newcommand{\Zd}{\mathds{Z}}
\newcommand{\Rd}{\mathds{R}}
\newcommand{\Gd}{\mathds{G}}
\newcommand{\Z}{{\Zd}}
\newcommand{\R}{{\Rd}}
\def\C{{\Cd}}
\newcommand{\G}{{\Gd}}
\newcommand{\X}{{{\sf X}}}
\def\Hom{{\rm Hom}}
\def\Gal{{\rm Gal}}
\def\coker{{\rm coker\,}}
\def\im{{\rm im\,}}
\newcommand{\upgam}{{\hs^\gamma\hm}}
\newcommand{\ssc}{{\rm sc}}
\newcommand{\Ga}{\Gamma}
\def\cc{\raise 1.7pt \hbox{\Tiny{$\bullet$}}}
\newcommand{\Ho}{{\mathrm{H}\kern 0.4pt}}
\newcommand{\Zl}{{\mathrm{Z}\kern 0.2pt}}
\newcommand{\Bd}{{\mathrm{B}\kern 0.2pt}}
\newcommand{\spl}{{\rm spl}}
\newcommand{\GmC}{\G_{{\rm m},\C}}
\newcommand{\GmR}{\G_{{\rm m},\R}}
\def\pia{{\pi_1^{\rm alg}}}
\newcommand{\pio}{\pi_0\hs}
\newcommand{\Hoz}{{\widehat \Ho{^0}}}
\begin{document}


\begin{abstract}
Using results of Cartan,  Matsumoto, and Casselman,
we give a short proof of Timashev's theorem
computing the real component group $\pio G(\R)$
of a connected reductive $\R$-group $G$
in terms of a maximal torus of $G$ containing a maximal split torus.
\end{abstract}

\maketitle

Let $G$ be a (connected) reductive group over the field $\R$ of real numbers
(we follow the convention of SGA3, where reductive groups
are assumed to be connected).
The assumption that $G$ is connected means that
the group of $\C$-points $G(\C)$ is connected,
but the group of $\R$-points $G(\R)$ might not be connected.
We wish to compute the {\em real component group} $\pio G(\R)=G(\R)/G(\R)^0$,
where $G(\R)^0$ denotes the identity component of $G(\R)$.

\begin{example*}
Let $G=\GmR^d$\hs, the split $\R$-torus of dimension $d$.
Then
\[G(\R)=(\R^\times)^d,\quad\ \pio G(\R)\cong (\Z/2\Z)^d.\]
\end{example*}

\begin{theorem}[\'Elie Cartan]\label{t:Cartan}
Let $G$ be a {\emm simply connected semisimple} $\R$-group.
Then $G(\R)$ is connected, that is, $\pio G(\R)=\{1\}$.
\end{theorem}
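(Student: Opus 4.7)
The plan is to use the Cartan decomposition to reduce connectedness of $G(\R)$ to connectedness of a maximal compact subgroup $K$, and then to invoke a fixed-point theorem for involutions of simply connected compact Lie groups.

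First, I would fix a Cartan involution $\theta$ of the real Lie group $G(\R)$, with fixed-point subgroup $K = G(\R)^\theta$ and associated Cartan decomposition $\g(\R) = \mathfrak{k} \oplus \mathfrak{p}$. The global Cartan decomposition furnishes a diffeomorphism
\[
K \times \mathfrak{p} \;\labelt{\sim}\; G(\R), \qquad (k, X) \longmapsto k\hs\exp(X),
\]
so $\pio G(\R) = \pio K$, and it suffices to show that $K$ is connected.

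Next, I would realize $K$ as the fixed-point subgroup of the antiholomorphic involution $\sigma$ of $G(\C)$ defining the $\R$-structure, restricted to a compact real form $U \subset G(\C)$ chosen compatibly with $\theta$. Since $G$ is simply connected as an algebraic group, $G(\C)$ is topologically simply connected by the standard identification of algebraic and topological fundamental groups for complex semisimple groups; via the polar decomposition $G(\C) \cong U \times \R^N$, the subgroup $U$ is then a connected, simply connected, compact Lie group.

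The final step would apply the theorem of Borel--Mostow (or Steinberg in the algebraic formulation): the fixed-point subgroup of an involutive automorphism of a connected, simply connected, compact Lie group is itself connected. Applied to $\sigma|_U$, this yields $K = U^\sigma$ connected, which completes the argument. The main obstacle is this last step; a proof avoiding it would presumably split into cases, treating $\R$-anisotropic $G$ by the classification of compact simply connected Lie groups, and $\R$-isotropic $G$ by showing $G(\R)$ is generated by the connected unipotent subgroups $U_\alpha(\R)$ together with a Levi of smaller $\R$-rank to which induction applies.
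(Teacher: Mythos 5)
Your argument is correct, but note that the paper does not actually prove this theorem: it only cites Borel--Tits, Gorbatsevich--Onishchik--Vinberg, and Platonov--Rapinchuk, so there is no in-paper proof to match. What you have written is essentially the standard transcendental proof found in the latter two references: reduce to a maximal compact subgroup $K$ via the global Cartan decomposition $G(\R)\cong K\times\mathfrak{p}$, identify $K$ with the fixed points of an involutive automorphism of a compatibly chosen compact real form $U$, observe that $U$ is simply connected because $G(\C)$ is (algebraic and topological simple connectedness agree for complex semisimple groups, and $U$ is a deformation retract of $G(\C)$), and invoke Borel's theorem (or Steinberg's algebraic version) that the fixed-point subgroup of an involution of a connected, simply connected compact Lie group is connected. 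Each of these inputs is a genuine theorem in its own right -- in particular the existence of a Cartan involution of $G(\R)$ commuting with the complex conjugation, and the connectedness-of-fixed-points theorem, which is the real content here -- so your proof is a reduction to standard but nontrivial results rather than an elementary argument; this is entirely in the spirit of the references the paper cites. The alternative route you sketch at the end (induction on the real rank, handling the isotropic case via generation by the groups $U_\alpha(\R)$) is closer to the Borel--Tits approach in the first cited reference and would avoid the fixed-point theorem at the cost of more case analysis. Either way, the statement stands as proved modulo the quoted classical inputs.
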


\begin{proof}
See Borel and Tits  \cite[Corollary 4.7]{Borel-Tits-C},
or Gorbatsevich, Onishchik and Vinberg  \cite[Theorem 2.2 in Section 4.2.2]{GOV},
or Platonov and Rapinchuk \cite[Proposition 7.6 on page 407]{PR}.
Borel and Tits \cite[Section 4.8]{Borel-Tits-C} write
that this result goes back to Cartan.
\end{proof}

We denote $\Ga=\Gal(\C/\R)=\{1,\gamma\}$,
where $\gamma$ is complex conjugation.
Let $A$ be a $\Ga$-module, that is, an abelian group with a $\Ga$-action.
Recall that the 0-th Tate cohomology group is defined by
\[\Hoz A=\Zl^0\hm A\hs/\hs\Bd^0\hm A,\ \ \text{where}\ \
\Zl^0\hm A=A^\Ga\coloneqq\{a\in A \mid \hm\upgam a=a\},\ \Bd^0\hm A=\{a'+\hm\upgam a'\mid a'\in A\}.\]

Let $T$ be an $\R$-torus.
We consider the cocharacter group
\[\X_*(T)=\Hom(\GmC\hs, T_\C).\]

\begin{theorem}[{Casselman \cite[Section 5]{Casselman};
see also \cite[Corollary 3.10]{Borovoi-Timashev-bis}}]
\label{t:Casselman}
For an $\R$-torus $T$, the homomorphism
\[\X_*(T)\to T(\C),\quad \nu\mapsto \nu(-1)\ \,\text{for}\ \nu\in \X_*(T)\]
induces an isomorphism
\[\Hoz\,\X_*(T)\isoto\pio T(\R).\]
\end{theorem}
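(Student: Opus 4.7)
The plan is to deduce the theorem from the complex exponential exact sequence of $\Ga$-modules
\[
0 \lra 2\pi i\,\X_*(T) \lra \Lie T_\C \labelt{\exp} T(\C) \lra 1,
\]
where $\X_*(T)$ is embedded via $\nu\mapsto d\nu(1)\in\Lie T_\C$, so that $2\pi i\,\nu$ is sent to $\exp(2\pi i\,d\nu(1))=\nu(1)=1$. Taking the long exact sequence in $\Ga$-cohomology and using that $\Lie T_\C=\Lie T_\R\otimes_\R\C$ has trivial Tate cohomology (it is a $\Q$-vector space, and Tate cohomology is annihilated by $|\Ga|=2$), one obtains an isomorphism
\[
T(\R)/\exp(\Lie T_\R) \isoto \Ho^1\bigl(\Ga,\,2\pi i\,\X_*(T)\bigr).
\]
The image $\exp(\Lie T_\R)$ is open in $T(\R)$ (since $\exp$ is a local diffeomorphism at $0$) and connected, hence equals the identity component $T(\R)^0$; thus the left-hand side is $\pio T(\R)$.

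Next, because $\gamma$ acts on $\C$ by complex conjugation, the $\Ga$-module $2\pi i\,\X_*(T)$ is $\X_*(T)$ with the action of $\gamma$ twisted by the sign character. For $\Ga$ of order two, twisting by sign shifts Tate cohomology by one: identifying $L := 2\pi i\,\X_*(T)$ with $\X_*(T)$ via $2\pi i\nu\leftrightarrow\nu$, one computes
\[
\Ho^1(\Ga,L) \;=\; \frac{\{\nu\in\X_*(T):(1-\gamma)\nu=0\}}{(1+\gamma)\X_*(T)} \;=\; \Hoz \X_*(T).
\]
Composing the two isomorphisms yields the canonical identification $\pio T(\R)\isoto\Hoz\X_*(T)$.

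To check that this composite is the map induced by $\nu\mapsto\nu(-1)$, I would take $\nu\in\X_*(T)^\Ga$ and note that $\nu(-1)=\exp\bigl(i\pi\,d\nu(1)\bigr)$, so that a lift of $\nu(-1)\in T(\R)$ to $\Lie T_\C$ is $i\pi\,d\nu(1)$. The connecting cocycle is then $\gamma\bigl(i\pi\,d\nu(1)\bigr)-i\pi\,d\nu(1)=-2\pi i\,d\nu(1)$, which under the twist corresponds to $-\nu\in\Hoz\X_*(T)$. Since $\Hoz$ is annihilated by $|\Ga|=2$, one has $-[\nu]=[\nu]$, as desired.

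The main obstacle I anticipate is the sign-and-twist bookkeeping in the last paragraph: verifying that the composite isomorphism really corresponds to $\nu\mapsto\nu(-1)$ rather than $\nu\mapsto\nu(-1)^{-1}$ or some variant, and making sure the implicit identifications of $\X_*(T)$ with its sign twist and of $L$ with $\X_*(T)$ are consistent throughout.
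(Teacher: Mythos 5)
Your argument is correct. Note that the paper itself gives no proof of this theorem: it is quoted as a known result with references to Casselman and to \cite[Corollary 3.10]{Borovoi-Timashev-bis}, and your proof via the exponential sequence $0\to 2\pi i\,\X_*(T)\to\Lie T_\C\to T(\C)\to 1$ is essentially the standard argument found in those references. All the key points are in place: the vanishing of $\Ho^1(\Ga,\Lie T_\C)$ because the coefficients are uniquely divisible, the identification of $\exp(\Lie T_\R)$ with $T(\R)^0$ as an open connected subgroup, the degree shift $\Ho^1(\Ga,\X_*(T)\otimes\Z(1))\cong\Hoz\,\X_*(T)$ coming from the sign twist on $2\pi i\,\X_*(T)$, and the cocycle computation showing that the connecting map sends $\nu(-1)$ to $-[\nu]=[\nu]$ (the last equality holding because $2\nu=\nu+\upgam\nu$ lies in $\Bd^0\X_*(T)$ for $\Ga$-fixed $\nu$). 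The sign ambiguity you worry about in the last paragraph is therefore harmless for exactly the reason you give, and your verification that $\delta(\nu(-1))=[\nu]$ for all $\nu\in\X_*(T)^\Ga$, combined with the bijectivity of $\delta$ and the surjectivity of $\X_*(T)^\Ga\onto\Hoz\,\X_*(T)$, does show that $\nu\mapsto\nu(-1)$ induces the asserted isomorphism $\Hoz\,\X_*(T)\isoto\pio T(\R)$.
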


For absolutely simple  $\R$-groups $G$ {\em of adjoint type},
the finite abelian groups $\pio G(\R)$ were tabulated by
Matsumoto \cite[Appendix]{Matsumoto}; see also
Th\v{a}\'ng \cite{Thang} and   Adams and Ta\"{\i}bi \cite{AT}.
Until 2021, the only known result on $\pio G(\R)$
for a general reductive $\R$-group $G$
was Matsumoto's theorem:

\begin{theorem}[{\hs Matsumoto \cite[Corollary of Theorem 1.2]{Matsumoto},
see also Borel and Tits \cite[Theorem 14.4]{Borel-Tits}}\hs]
\label{t:Matsumoto}
Let $T_{\rm spl}\subseteq G$ be a maximal split $\R$-torus
in a reductive $\R$-group $G$.
Then the natural homomorphism
\[\pio T_{\rm spl}(\R)\to \pio G(\R)\]
is surjective.
\end{theorem}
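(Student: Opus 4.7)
The plan is to reduce the claim in two stages: first, via the open Bruhat cell with respect to a minimal $\R$-parabolic, I will exhibit $\pio G(\R)$ as a quotient of $\pio M(\R)$, where $M = Z_G(T_{\rm spl})$ is the Levi subgroup; second, I will reduce $\pio M(\R)$ to $\pio T_{\rm spl}(\R)$ by showing that $M/T_{\rm spl}$ is anisotropic and has connected real points.

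For the first reduction, let $P = M\ltimes U$ be a minimal $\R$-parabolic with Levi $M = Z_G(T_{\rm spl})$ and unipotent radical $U$, and let $U^-$ be the unipotent radical of the opposite parabolic. The multiplication map $U^-\times P\to G$ is an open immersion onto a Zariski-dense open subvariety $\Omega\subseteq G$. On real points, $\Omega(\R)$ is open in $G(\R)$, and its complement has strictly smaller real dimension, so $\Omega(\R)$ meets every connected component of $G(\R)$. Since $U(\R)$ and $U^-(\R)$ are connected (being unipotent $\R$-groups in characteristic zero), the identification $\Omega(\R) \cong U^-(\R)\times M(\R)\times U(\R)$ yields $\pio M(\R) = \pio P(\R) = \pio\Omega(\R)$, which therefore surjects onto $\pio G(\R)$.

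For the second reduction, $T_{\rm spl}$ is central in $M$ and is its unique maximal split $\R$-torus, so $H\coloneqq M/T_{\rm spl}$ is anisotropic reductive. Hilbert 90 gives $H^1(\R, T_{\rm spl}) = 0$, so the sequence $1\to T_{\rm spl}\to M\to H\to 1$ induces an exact sequence
\[
\pio T_{\rm spl}(\R)\lra\pio M(\R)\lra\pio H(\R)\lra 1,
\]
and it suffices to show $H(\R)$ is connected. Since $H$ is anisotropic, $H(\R)$ is compact and hence contains no nontrivial unipotent elements, so every $h\in H(\R)$ is semisimple. The identity component $Z_H(h)^0$ is a connected reductive $\R$-group in which $h$ lies in every maximal torus (being central there, as all maximal tori of $Z_H(h)^0$ over $\C$ are conjugate). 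Pick a maximal $\R$-torus $T\subseteq Z_H(h)^0$; by rank it is also a maximal $\R$-torus of $H$, hence anisotropic. Then $X_*(T)^\Ga = 0$, so $\Hoz X_*(T) = 0$, and Casselman's Theorem \ref{t:Casselman} gives $T(\R)$ connected. Thus $h\in T(\R)\subseteq H(\R)^0$. The main obstacle is this final anisotropic step, in particular the classical fact that every semisimple $h\in H(\R)$ is contained in a maximal $\R$-torus of $H$.
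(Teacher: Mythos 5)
Your proof is correct, but note that the paper does not prove this statement at all: Theorem \ref{t:Matsumoto} is one of the three classical inputs (alongside Cartan's and Casselman's theorems) that the paper deliberately treats as a black box, citing Matsumoto and Borel--Tits. What you have written is essentially a reconstruction of the classical argument from those sources. Both of your reduction steps are sound: the big-cell step works because each connected component of the Lie group $G(\R)$ is open of full dimension $\dim G$ and so cannot lie inside the real points of the proper closed complement of $\Omega$, while $\Omega(\R)\cong U^-(\R)\times M(\R)\times U(\R)$ as manifolds with $U^\pm(\R)$ connected; the second step correctly uses that $T_{\rm spl}$ is the unique, hence normal, hence central maximal split torus of $M=Z_G(T_{\rm spl})$, so that $H=M/T_{\rm spl}$ is anisotropic, together with Hilbert 90 for the split torus $T_{\rm spl}$. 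For the anisotropic step, the fact you flag at the end as ``the main obstacle'' is in fact already discharged by your own argument: a semisimple $h\in H(\R)$ lies in some maximal torus of $Z_H(h)^0$ over $\C$; all such tori are conjugate under $Z_H(h)^0(\C)$, whose elements fix $h$, so $h$ lies in every maximal torus of $Z_H(h)^0$, in particular in $T(\C)\cap H(\R)=T(\R)$ for a maximal $\R$-torus $T\subseteq Z_H(h)^0$ (which exists by Grothendieck's theorem and is maximal in $H$ by the rank count); anisotropy gives $\X_*(T)^\Ga=0$, hence $\Hoz\,\X_*(T)=0$ and $T(\R)$ is connected by Theorem \ref{t:Casselman} --- and for this one does not even need $Z_H(h)^0$ to be reductive. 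Invoking Casselman's theorem here is a pleasant way to stay inside the paper's toolkit, though connectedness of anisotropic real tori is also elementary. What your route buys is a self-contained treatment at the cost of a page of standard structure theory (big cell, conjugacy of maximal split tori, Jordan decomposition, anisotropic $\Leftrightarrow$ compact); what the paper's route buys is brevity, which is its stated purpose.
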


\begin{corollary*}
$\pio G(\R)\simeq (\Z/2\Z)^n$, where
$n\le{\rm rank}_\R \hs G\coloneqq \dim T_{\rm spl}$.
\end{corollary*}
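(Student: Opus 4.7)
The plan is to combine the Example with Matsumoto's theorem, and then invoke the elementary fact that any quotient of an elementary abelian $2$-group is again an elementary abelian $2$-group.

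First, I would unwind the definitions. Since $T_{\rm spl}$ is a split $\R$-torus of dimension $d \coloneqq \dim T_{\rm spl} = {\rm rank}_\R\hs G$, we have $T_{\rm spl} \simeq \GmR^d$ as $\R$-groups. By the Example preceding Theorem \ref{t:Cartan}, this yields
\[ \pio T_{\rm spl}(\R) \;\cong\; (\Z/2\Z)^d. \]
(Alternatively, one can read this off from Casselman's theorem applied to $T_{\rm spl}$: the Galois action on $\X_*(T_{\rm spl}) \cong \Z^d$ is trivial, and $\Hoz$ of a trivial $\Ga$-module $\Z^d$ is $(\Z/2\Z)^d$.)

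Next, Matsumoto's theorem (Theorem \ref{t:Matsumoto}) provides a surjective homomorphism
\[ \pio T_{\rm spl}(\R) \twoheadrightarrow \pio G(\R). \]
Thus $\pio G(\R)$ is a quotient of $(\Z/2\Z)^d$. Any such quotient is an $\mathbb{F}_2$-vector space of dimension $n \le d$, hence isomorphic to $(\Z/2\Z)^n$ with $n \le d = {\rm rank}_\R\hs G$, which is exactly the claim.

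There is really no obstacle here: the corollary is a direct structural consequence of the two inputs (the computation for split tori and Matsumoto's surjectivity). The only tiny point to notice is that quotients of elementary abelian $2$-groups remain elementary abelian $2$-groups, which is immediate from the classification of finitely generated abelian groups (or from the fact that a quotient of an $\mathbb{F}_2$-vector space is an $\mathbb{F}_2$-vector space).
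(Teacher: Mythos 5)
Your proof is correct and follows exactly the route the paper intends: the paper states this as an immediate corollary of Matsumoto's theorem combined with the Example computing $\pio T_{\rm spl}(\R)\cong(\Z/2\Z)^d$ for a split torus, and offers no further argument. Nothing is missing.
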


For a reductive $\R$-group $G$,
let $G^\ssc$ denote the universal cover
of the commutator subgroup $[G,G]$ of $G$,
which is simply connected;
see \cite[Proposition (2:24)(ii)]{Borel-Tits-C}
or \cite[Corollary A.4.11]{CGP}.
Consider the composite homomorphism
\[\rho\colon G^\ssc\onto[G,G]\into G,\]
which is in general is neither injective nor surjective.
For a maximal torus  $T\subseteq G$,
we denote
\[T^\ssc=\rho^{-1}(T)\subseteq G^\ssc.\]
We consider the {\em algebraic fundamental group}
of \cite{Borovoi-Memoir} defined by
\[\pia G=\X_*(T)/\rho_*\X_*(T^\ssc).\]
The Galois group $\Ga$ naturally acts on $\pia G$, and the $\Ga$-module $\pia G$
is well defined (does not depend on the choice of $T$
up to a transitive system of isomorphisms);
see \cite[Lemma 1.2]{Borovoi-Memoir}.

In \cite{Borovoi-Timashev-bis} Timashev and the first-named author
defined an action of the finite abelian group
$\Hoz \pia G$ on the first Galois cohomology set $\Ho^1(\R,G)$,
and proved that $\pio G(\R)$ is canonically isomorphic to
$\big(\Hoz \pia G\big) _{[1]}$, the stabilizer in $\Hoz \pia G$
of the neutral cohomology class
$[1]\in \Ho^1(\R,G)$; see \cite[Theorem 0.5]{Borovoi-Timashev-bis}.

Very recently Timashev proved the following result
computing $\pio G(\R)$ in terms of a maximal torus of $G$
containing a maximal split torus:

\begin{theorem}[{\hs Timashev \cite[Theorem 6]{Timashev}\hs}]
\label{t:Timashev}
Let $G$ be a reductive $\R$-group,
and $T\subseteq G$ be a maximal torus
{\emm containing a maximal split torus $T_\spl$ of $G$}.
Then there is a canonical isomorphism
\[ \X_*(T)^\Ga/\hs\big\{\nu+\hm\upgam \nu+\rho_*(\nu^\ssc)\ |\ \nu\in\X_*(T),
\ \nu^\ssc\in\X_*(T^\ssc)\big\} \,\isoto\, \pio G(\R)\]
sending the class of $\nu\in X(T)^\Ga$ to $\nu(-1)\cdot G(\R)^0\in\pio G(\R).$
\end{theorem}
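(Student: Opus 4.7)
The plan is to define the natural map
\[
 \alpha\colon \X_*(T)^\Ga \to \pio G(\R), \qquad \nu\mapsto \nu(-1)\cdot G(\R)^0,
\]
a group homomorphism (since $\nu\in\X_*(T)^\Ga$ amounts to an $\R$-morphism $\GmR\to T$, so $\nu(-1)\in T(\R)\subseteq G(\R)$), and to show that $\alpha$ is surjective with kernel equal to the subgroup $\mathrm{Tim}\subseteq\X_*(T)^\Ga$ appearing in the theorem. Surjectivity follows by combining Matsumoto (Theorem~\ref{t:Matsumoto}) with Casselman (Theorem~\ref{t:Casselman}) for $T_\spl$: every class of $\pio G(\R)$ is represented by some $t_\spl\in T_\spl(\R)$, and $t_\spl \equiv \nu_\spl(-1) \pmod{T_\spl(\R)^0}$ for some $\nu_\spl\in\X_*(T_\spl)\subseteq\X_*(T)^\Ga$. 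The containment $\mathrm{Tim}\subseteq\ker\alpha$ is immediate: $(\nu+\hm\upgam\nu)(-1)\in T(\R)^0$ by Casselman for $T$, and $\rho_*(\nu^\ssc)(-1)=\rho(\nu^\ssc(-1))\in\rho(G^\ssc(\R))\subseteq G(\R)^0$ by Cartan (Theorem~\ref{t:Cartan}), where one notes that $\Ga$-invariance of $\nu+\hm\upgam\nu+\rho_*(\nu^\ssc)$ forces $\rho_*(\nu^\ssc)$ to be $\Ga$-invariant, hence $\nu^\ssc\in\X_*(T^\ssc)^\Ga$ (as $\rho_*$ is $\Ga$-equivariant and injective), so that $\nu^\ssc(-1)\in T^\ssc(\R)\subseteq G^\ssc(\R)$.

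The substantive step is the reverse containment $\ker\alpha\subseteq\mathrm{Tim}$, which I would reduce to the following geometric statement, the \emph{Key Lemma}:
\[
 T(\R)\cap G(\R)^0\ =\ T(\R)^0\cdot\rho(T^\ssc(\R)).
\]
Granting it, the deduction is a short Casselman-chase. For $\mu\in\X_*(T)^\Ga$ with $\mu(-1)\in G(\R)^0$, write $\mu(-1)=t_0\cdot\rho(s)$ with $t_0\in T(\R)^0$ and $s\in T^\ssc(\R)$. Casselman applied to $T^\ssc$ gives $s=\nu^\ssc(-1)\cdot s_0$ with $\nu^\ssc\in\X_*(T^\ssc)^\Ga$ and $s_0\in T^\ssc(\R)^0$; as $\rho(s_0)\in T(\R)^0$, this yields $(\mu-\rho_*\nu^\ssc)(-1)=t_0\cdot\rho(s_0)\in T(\R)^0$, and Casselman for $T$ then forces $\mu-\rho_*\nu^\ssc=\nu'+\hm\upgam\nu'$ for some $\nu'\in\X_*(T)$, so $\mu\in\mathrm{Tim}$.

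For the Key Lemma, the inclusion $\supseteq$ is clear (using Cartan for the $G^\ssc$-factor). For $\subseteq$, one cannot lift paths through $\rho\colon G^\ssc\twoheadrightarrow [G,G]$ directly — a path in $G(\R)$ from $1$ to $t\in T(\R)\cap G(\R)^0$ need not remain in $[G,G]$, and the endpoint $t$ need not even lie in $[G,G]$. The right covering to use is the finite \'etale central isogeny
\[
 \pi\colon Z(G)^0\times G^\ssc\to G,\qquad (z,g)\mapsto z\cdot\rho(g),
\]
surjective (as $G=Z(G)^0\cdot[G,G]$) with finite central kernel, hence a topological covering on $\C$-points. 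Given $t\in T(\R)\cap G(\R)^0$ and a path $\alpha\colon[0,1]\to G(\R)$ from $1$ to $t$, lift it to $\tilde\alpha\colon[0,1]\to (Z(G)^0\times G^\ssc)(\C)$ with $\tilde\alpha(0)=(1,1)$. Applying complex conjugation pointwise produces another lift of $\alpha$ (since $\alpha\subseteq G(\R)$) starting at the $\Ga$-fixed point $(1,1)$, so by uniqueness of lifting $\tilde\alpha$ is $\Ga$-invariant, i.e.\ lands in $Z(G)^0(\R)\times G^\ssc(\R)$. Writing $\tilde\alpha(1)=(z,g)$, one has $t=z\cdot\rho(g)$; the first coordinate of $\tilde\alpha$ connects $z$ to $1$ inside $Z(G)^0(\R)$, so $z\in Z(G)^0(\R)^0\subseteq T(\R)^0$ (using $Z(G)^0\subseteq T$); and $\rho(g)=z^{-1}t\in T$ forces $g\in\rho^{-1}(T)\cap G^\ssc(\R)=T^\ssc(\R)$. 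The main obstacle is selecting this lifting space: the simply connected cover alone is insufficient, and the extra central factor $Z(G)^0$ is essential — it absorbs the part of $t$ unreachable from $G^\ssc$, while the path-lifting argument automatically places its contribution inside $T(\R)^0$ rather than in all of $T(\R)$.
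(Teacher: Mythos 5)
Your proof is correct and follows essentially the same route as the paper: your Key Lemma is exactly Proposition~\ref{p:Gabber} (restated there as $\ker i_*=\im\big[\rho_*\colon\pio T^\ssc(\R)\to\pio T(\R)\big]$), and it is proved from the same surjection $Z(G)^0\times G^\ssc\to G$ together with Cartan's theorem, the only difference being that the paper gets the surjectivity of $Z(G)^0(\R)^0\times G^\ssc(\R)^0\to G(\R)^0$ from the implicit function theorem rather than by path lifting in the covering. The remaining ingredients (Matsumoto's theorem for surjectivity, Casselman's theorem to translate between $\pio T(\R)$ and $\Hoz\,\X_*(T)$) coincide with the paper's derivation of Theorem~\ref{t:Timashev} via Theorem~\ref{t:Timashev-pi0}.
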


In view of Casselman's theorem (Theorem \ref{t:Casselman}),  
Timashev's theorem can be stated as follows:

\begin{theorem}\label{t:Timashev-pi0}
For $G$ and $T$ as in Theorem \ref{t:Timashev},
the homomorphism $\pio T(\R)\to\pio G(\R)$ induced by the inclusion $T\into G$,
induces an isomorphism
\[\coker\big[\hs\pio T^\ssc(\R)\to\pio T(\R)\hs\big]\hs\isoto\hs \pio G(\R).\]
\end{theorem}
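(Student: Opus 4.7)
\emph{Factorization and surjectivity.} By Cartan's theorem (Theorem \ref{t:Cartan}), $G^\ssc(\R)$ is connected, so $\rho(T^\ssc(\R))\subseteq G(\R)^0$, and thus the composition $\pio T^\ssc(\R)\to\pio T(\R)\to\pio G(\R)$ is trivial. Hence the map $\pio T(\R)\to\pio G(\R)$ descends to the cokernel. The descended map is surjective by Matsumoto's theorem (Theorem \ref{t:Matsumoto}), applied along the inclusions $T_\spl\subseteq T\subseteq G$.

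\emph{Reduction to a key claim.} Injectivity of the descended map amounts to the set-theoretic identity
\[T(\R)\cap G(\R)^0 \;=\; T(\R)^0\cdot\rho(T^\ssc(\R)).\]
I will deduce this from the following claim, valid for every maximal $\R$-torus $T\subseteq G$ (independent of whether $T\supseteq T_\spl$):
\[\mathrm{(KC)}\qquad G(\R)^0 \;=\; T(\R)^0\cdot\rho(G^\ssc(\R)).\]
Granting (KC), given $t\in T(\R)\cap G(\R)^0$, write $t=s\rho(h)$ with $s\in T(\R)^0$ and $h\in G^\ssc(\R)$. Then $\rho(h)=s^{-1}t\in T(\R)$ forces $h\in\rho^{-1}(T)(\R)=T^\ssc(\R)$, so $t\in T(\R)^0\cdot\rho(T^\ssc(\R))$.

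\emph{Proof of (KC).} Since $\rho\colon G^\ssc\to[G,G]$ is an isogeny, the induced map $G^\ssc(\R)\to[G,G](\R)$ is open; its image is therefore a connected (by Cartan) open subgroup of $[G,G](\R)$, and so $\rho(G^\ssc(\R))=[G,G](\R)^0$. As a characteristic subgroup of the normal subgroup $[G,G](\R)\subseteq G(\R)$, it is normal in $G(\R)$, hence $T(\R)^0\cdot\rho(G^\ssc(\R))$ is a subgroup of $G(\R)^0$. For openness, the decomposition $\Lie G=\Lie Z(G)^0\oplus\Lie[G,G]$ combined with the inclusion $Z(G)^0\subseteq T$ yields
\[\Lie T(\R)+d\rho(\Lie G^\ssc(\R)) \;=\; \Lie T(\R)+\Lie[G,G](\R) \;=\; \Lie G(\R),\]
so the smooth map $T(\R)^0\times G^\ssc(\R)\to G(\R)$, $(s,h)\mapsto s\rho(h)$, is a submersion at $(e,e)$. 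Hence $T(\R)^0\cdot\rho(G^\ssc(\R))$ contains a neighborhood of the identity; being an open subgroup of the connected Lie group $G(\R)^0$, it equals $G(\R)^0$. The main (and essentially only) obstacle is this Lie-algebra submersion argument, which becomes elementary once the decompositions are in hand.
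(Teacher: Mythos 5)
Your proposal is correct and follows essentially the same route as the paper: Cartan's theorem for the vanishing of the composite, Matsumoto's theorem for surjectivity, and a ``surjective near the identity'' decomposition of $G(\R)^0$ to identify the kernel. Your key claim (KC) with $T(\R)^0$ is exactly the paper's Proposition~\ref{p:Gabber} in substance --- the paper proves it by applying the implicit function theorem to the surjection $Z(G)^0\times G^\ssc\to G$ and noting $Z(G)^0(\R)^0\subseteq T(\R)^0$, which is the same submersion argument you spell out.
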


Below we give a one-page proof of Theorem \ref{t:Timashev-pi0}
using Cartan's theorem and Matsumoto's theorem.

\begin{proposition}\label{p:Gabber}
For any maximal torus $T$ in a reductive $\R$-group $G$,
consider the homomorphism
\[i_*\colon \pio T(\R)\to \pio G(\R)\]
induced by the inclusion homomorphism $i\colon T\into G$.
Then
\[\ker i_*=\im\big[\rho_*\colon\pio T^\ssc(\R)\to \pio T(\R)\big].\]
\end{proposition}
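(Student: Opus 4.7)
The strategy is to prove both inclusions. The inclusion $\im\rho_*\subseteq\ker i_*$ is immediate from Cartan's theorem (Theorem \ref{t:Cartan}): the composite $T^\ssc\labeltoo{\rho}T\labeltoo{i}G$ factors through $G^\ssc$, whose group of $\R$-points is connected, so the image of $T^\ssc(\R)$ in $G(\R)$ lies in $G(\R)^0$.

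For the reverse inclusion, given $t\in T(\R)\cap G(\R)^0$, I will produce $t_1\in T(\R)^0$ and $\tilde t\in T^\ssc(\R)$ with $t=t_1\cdot\rho(\tilde t)$. First I reduce modulo $T(\R)^0$ to an element of $T\cap[G,G]$: the projection $\pi\colon G\onto G/[G,G]$ onto an $\R$-torus restricts to a surjection of $\R$-tori $\pi|_T\colon T\onto G/[G,G]$, which induces a surjection on identity components $T(\R)^0\onto(G/[G,G])(\R)^0$. Since $\pi(t)\in(G/[G,G])(\R)^0$, I choose $t_0\in T(\R)^0$ with $\pi(t_0)=\pi(t)$ and set $t'':=t\hs t_0^{-1}\in(T\cap[G,G])(\R)\cap G(\R)^0$.

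Next, I invoke the structural identity $G(\R)^0=[G,G](\R)^0\cdot Z(\R)^0$, where $Z:=Z(G)^0$. (This holds because the central isogeny $[G,G]\times Z\to G$ has open image on $\R$-points, hence contains $G(\R)^0$, and the identity component of this image is $[G,G](\R)^0\cdot Z(\R)^0$.) Write $t''=a'\hs b$ with $a'\in[G,G](\R)^0$ and $b\in Z(\R)^0$. Since $Z\subseteq T$, we have $b\in T(\R)^0$, and consequently $a'=t''b^{-1}\in T(\R)\cap[G,G](\R)^0$. Apply Cartan once more to the semisimple group $[G,G]$: the covering map $G^\ssc(\R)\to[G,G](\R)$ has open connected image, which must equal $[G,G](\R)^0$; lift $a'=\rho(\tilde t)$ with $\tilde t\in G^\ssc(\R)$. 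Since $\rho(\tilde t)=a'\in T$, we have $\tilde t\in\rho^{-1}(T)(\R)=T^\ssc(\R)$.

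Because $b$ is central in $G$ it commutes with $a'$, so $t=t_0\cdot t''=(t_0 b)\cdot\rho(\tilde t)$ with $t_0 b\in T(\R)^0$, as required. The main obstacle I anticipate is the central correction $b$: the element $t''$ need not itself lie in $[G,G](\R)^0$, but the observation $Z\subseteq T$ lets one absorb $b$ into the $T(\R)^0$ factor, after which the remaining argument collapses to Cartan's theorem applied to the derived subgroup $[G,G]$.
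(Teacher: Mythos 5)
Your proof is correct and follows essentially the same route as the paper's: both inclusions, with the decisive step being the decomposition $G(\R)^0=Z(\R)^0\cdot\rho\big(G^\ssc(\R)^0\big)$ (the paper obtains it from the surjection $Z(G)^0\times G^\ssc\to G$ and the implicit function theorem, you from the central isogeny $[G,G]\times Z\to G$ together with a second application of Cartan to identify $[G,G](\R)^0$ with $\rho\big(G^\ssc(\R)\big)$), after which the central factor is absorbed into $T(\R)^0$ and the remaining factor is forced into $\rho\big(T^\ssc(\R)\big)$. Your preliminary reduction modulo $G/[G,G]$ is superfluous---you could apply the identity $G(\R)^0=[G,G](\R)^0\cdot Z(\R)^0$ directly to $t$ rather than to $t''$---but it is harmless.
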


\begin{proof}
We have commutative diagrams
\[
\xymatrix{
T^\ssc\ar[r]^-{i^\ssc}\ar[d]_-\rho & G^\ssc\ar[d]^-\rho &&\pio T^\ssc(\R)\ar[r]^-{i^\ssc_*}\ar[d]_-{\rho_*}  & \pio G^\ssc(\R)\ar[d]^-{\rho_*} \\
T\ar[r]^-i  & G                                         &&\pio T(\R)\ar[r]^-{i_*}                            &\pio G(\R)
}
\]
where  $\pio G^\ssc(\R)=\{1\}$ by Cartan's theorem (Theorem \ref{t:Cartan}).
It follows that
\[\im\big[\rho_*\colon\pio T^\ssc(\R)\to \pio T(\R)\big]\,\subseteq\,\ker i_*.\]

Let $S=Z(G)^0$, the identity component of the center $Z(G)$
of our reductive group $G$.
We have a surjective $\R$-homomorphism
\[S\times G^\ssc\to G, \quad (s,g^\ssc)\mapsto s\cdot\rho(g^\ssc)
\ \ \text{for}\ s\in S(\C),\ g^\ssc\in G^\ssc(\C).\]
By the implicit function theorem, the induced homomorphism
\[ S(\R)^0\times G^\ssc(\R)^0\to G(\R)^0\]
is surjective.

Let $t\in T(\R)\subseteq G(\R)$, and assume that
$t\cdot T(\R)^0\in \ker i_*$\hs, that is, $t\in G(\R)^0$.
By the above, we can write
$t=s\cdot\rho(g^\ssc)$ where
$s\in S(\R)^0\subseteq T(\R)^0$ and $g^\ssc\in G^\ssc(\R)^0$.
Since $t\in T(\R)$ and $s\in T(\R)$, we see that $\rho(g^\ssc)\in T(\R)$,
whence $g^\ssc\in T^\ssc(\R)$.
We conclude that
\[ t\in T(\R)^0\cdot\rho\big(T^\ssc(\R)\big),\]
that is,
\[t\cdot T(\R)^0\in \im\big[\rho_*\colon\pio T^\ssc(\R)\to \pio T(\R)\big].\]
Thus
\[\ker i_*\,\subseteq\, \im\big[\rho_*\colon\pio T^\ssc(\R)\to \pio T(\R)\big],\]
which completes the proof of the proposition.
\end{proof}

\begin{proof}[Proof of Theorem \ref{t:Timashev-pi0}.]
Consider the sequence of groups and homomorphisms
\begin{equation}\label{e:rho-i}
\pio T^\ssc(\R)\labelto{\rho_*}\pio T(\R)\labelto{i_*}\pio G(\R)\to 0.\tag{$*$}
\end{equation}
By Proposition \ref{p:Gabber}, this sequence is exact at $\pio T(\R)$.
By Matsumoto's theorem (Theorem \ref{t:Matsumoto}),
the natural  homomorphism $\pio T_\spl(\R)\to\pio G(\R)$ is surjective.
It follows that the homomorphism $i_*$ in the sequence \eqref{e:rho-i} is surjective.
We conclude  that the sequence \eqref{e:rho-i} is exact,
which completes the proof of the theorem.
\end{proof}

\noindent{\sc Acknowledgements.}
This note was conceived when the first-named author
was a guest of the Laboratory of Mathematics of Orsay
(Laboratoire de Math\'ematiques d’Orsay, Universit\'e Paris-Saclay).
He is grateful to the Laboratory for hospitality and good working conditions.

\end{document}